\documentclass[12pt]{amsart}
\usepackage{amssymb,epic,eepic,epsfig,amsbsy,amsmath,amscd}

\usepackage{psfrag}

\numberwithin{equation}{section}
                        \textwidth16cm
                        \textheight23cm
                        \topmargin-1cm
                        \oddsidemargin 0.2cm
                        \evensidemargin 0.2cm
                        \theoremstyle{plain}
\usepackage{mathrsfs}

\newtheorem{theorem}{Theorem}[section]

\newtheorem{lemma}[theorem]{Lemma}
\newtheorem{corollary}[theorem]{Corollary}
\newtheorem{proposition}[theorem]{Proposition}

\theoremstyle{definition}

\def\BC{\mathbb C}

\def\BZ{\mathbb Z}

\def\SL{\mathrm{SL}}
\def\S{\mathbb S}

\def\fb{\mathfrak b}

\def\la{\langle}
\def\ra{\rangle}

\DeclareMathOperator{\tr}{\mathrm tr}

\def\ve{\varepsilon}
\def\be { \begin{equation} }
\def\ee { \end{equation} }

\begin{document}

\title[Some families of minimal elements on prime knots]
{Some families of minimal elements for the partial ordering on prime knots}

\author{Fumikazu Nagasato and Anh T. Tran}

\address{Department of Mathematics, Meijo University, 
Tempaku, Nagoya 468-8502, Japan}
\email{fukky@meijo-u.ac.jp}

\address{Department of Mathematical Sciences, The University of Texas at Dallas, 
800 W Campbell Rd, FO 35, Richardson TX 75080, USA}
\email{att140830@utdallas.edu}

\begin{abstract}
We show that all twist knots, certain double twist knots and some other 
2-bridge knots are minimal elements for the partial ordering on the set of prime knots. 
The key to these results are presentations of their character varieties 
using Chebyshev polynomials and a criterion for irreducibility of a polynomial 
of two variables. These give us an elementary method to discuss the number of 
irreducible components of the character varieties, which concludes the result essentially. 
\end{abstract}

\maketitle


\section{Outline of this research}\label{sec_intro}
We research the partial ordering on the set of prime knots by using algebraic sets 
associated to knot groups, now known as the character varieties of knot groups. 
The framework of character varieties introduced by Culler and Shalen \cite{CS} 
for a finitely presented group has been giving powerful tools 
and is now playing important roles in geometry and topology. 
On the other hand, it is not easy to calculate character varieties 
and thus to investigate their geometric structures in general, 
though an underlying idea of character varieties is simple as follows. 
Let $G$ be a finitely presented group generated by 
$n$ elements $g_1,\cdots,g_n$. For a representation $\rho:G \to \SL_2(\BC)$, 
let $\chi_{\rho}$ be the character of $\rho$, which is the function on $G$ 
defined by $\chi_{\rho}(g):=\tr(\rho(g))$ ($\forall g\in G$). 
Throughout this paper, we simply denote by $\tr(g)$ the trace $\tr(\rho(g))$ 
for an unspecified representation $\rho:G \to \SL_2(\BC)$. 
We sometimes omit the brackets in the trace like $\tr(a)=\tr a$ 
for simplicity. 
By \cite{CS} (see also \cite{GM}), {\it the $\SL_2(\BC)$-trace identity} 
\[
\tr(AB)=\tr(A)\tr(B)-\tr(AB^{-1})\hspace*{1cm} (\forall A,B\in\SL_2(\BC))
\]
shows that for any element $g \in G$, $\tr(g)$ is described by a polynomial 
in $\{\tr(g_i)\}_{1\leq i\leq n}$, $\{\tr(g_ig_j)\}_{1\leq i<j \leq n}$ and 
$\{\tr(g_ig_jg_k)\}_{1 \leq i<j<k \leq n}$. 
Then the character variety of $G$, denoted by $X(G)$, is constructed basically 
by the image of the set $\chi(G)$ of characters of $\SL_2(\BC)$-representations 
of $G$ under the map
\[
t:\chi(G) \to \BC^{n+{n\choose 2}+{n\choose 3}},\ 
t(\chi_{\rho}):=(\tr(g_i);\tr(g_ig_j);\tr(g_ig_jg_k)). 
\]
The resulting set turns out to be a closed algebraic set. 
By definition, this algebraic set depends on a choice of generators of $G$ 
(the coordinates of $X(G)$ vary if we change the choice of generating set of $G$). 
However, the geometric structures do not depend on that choice up to {\it bipolynomial map}. 
Here two algebraic sets $V$ and $W$ in some complex spaces 
are said to be isomorphic (bipolynomial equivalent) if there exist polynomial maps 
$f:V \to W$ and $g:W \to V$ such that $g \circ f=id_{V}$, $f \circ g=id_{W}$. 
We call each of $f$ and $g$ an isomorphism or a bipolynomial map. 
So $X(G)$ is an invariant of $G$ up to isomorphism (bipolynomial equivalence) of algebraic sets. 

In this research, we mainly apply {\it the Chebyshev polynomials} $S_n(z)$ 
($\forall n \in \BZ$) of the second kind defined by
\[
S_0(z)=1,\ S_1(z)=z,\ S_n(z)=zS_{n-1}(z)-S_{n-2}(z), 
\]
to describe the character varieties. 
Note that $S_{-n}(z)=-S_{n-2}(z)$ holds for any integer $n$. 
$S_n(z)$ naturally appears in the calculations of $X(G)$
since they have the similar property as the $SL_2(\BC)$-trace identity. 
For example, the third relation above of $S_n(z)$ exactly coincides with the rule 
$\tr(z^n)=\tr(z)\tr(z^{n-1})-\tr(z^{n-2})$ coming from the trace identity. 
The other Chebyshev polynomials $T_n(z)$ used in Section \ref{sec_bp3} 
also have the same property. Hence the process of calculations of 
$X(G)$ using the trace identity can be encoded into 
the Chebyshev polynomials naturally. 

Now we demonstrate a calculation of $X(G)$ using the Chebyshev polynomials $S_n(z)$ 
in the case where $G$ is {\it a knot group}. 
For a knot $K$ in $\S^3$, we denote by $G(K)$ the knot group of $K$, 
i.e., the fundamental group of the knot complement $\S^3-K$. 
For example, there exist knots, called {\it $2$-bridge knots}, each of which is parametrized by 
a sequence of integers $[a_1,a_2,\cdots,a_r]$ associated to the number of {\it twists}. 
(See Figure \ref{2-bridge_knots}. For more details, refer to \cite{Kawauchi}.)  
The knot $K_m=[-2,-m]$ depicted in Figure \ref{twist_knots} is a type of 2-bridge knots, 
called the {\it $m$-twist knot}. Sometimes the sequence $[a_1,a_2,\cdots,a_r]$ 
is encoded in a rational number $p/q$ $(p>0,-p<q<p)$ 
by the following continued fraction: 
\[
\frac{p}{q}=a_1+\cfrac{1}{a_2+\cfrac{1}{\ddots+\cfrac{1}{a_r}}}.
\]
It is shown that $p$ and $q$ may be taken to be coprime and $p$ is always odd. 
Then we denote by $\fb(p,q)$ the 2-bridge knot with a rational number expression $p/q$. 

\begin{figure}[hbtp]
\[
\psfrag{a1}{$a_1$} \psfrag{a2}{$a_2$} \psfrag{a2m-1}{$a_{2r-1}$} 
\psfrag{a}{$a$} \psfrag{b}{$b$}
[a_1,a_2,\cdots,a_{2r-1}]=
\begin{minipage}{10cm}\includegraphics[width=\hsize]{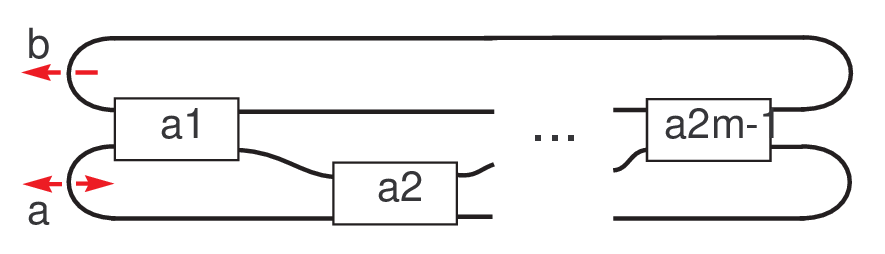}\end{minipage}
\]\[
\psfrag{a1}{$a_1$} \psfrag{a2}{$a_2$} \psfrag{a2m}{$a_{2r}$} 
\psfrag{a}{$a$} \psfrag{b}{$b$}
[a_1,a_2,\cdots,a_{2r}]=
\begin{minipage}{10cm}\includegraphics[width=\hsize]{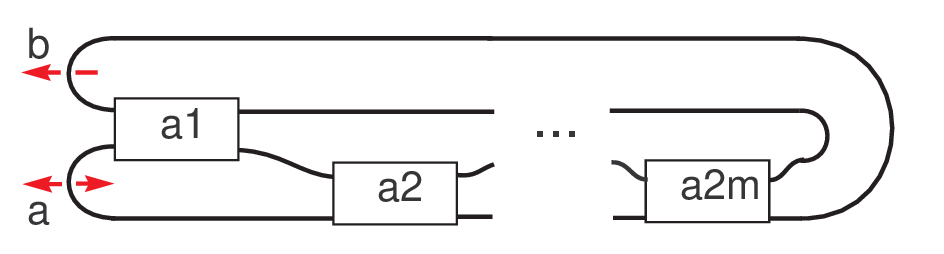}\end{minipage}
\]
\caption{The 2-bridge knot $[a_1,a_2,\cdots,a_{2r-1}]$ and $[a_1,a_2,\cdots,a_{2r}]$ 
and generators $a$ and $b$ of their knot groups. 
The orientation of $a$ is chosen so that $a$ and $b$ are conjugate. 
$a_i$ denotes the number of twists with {\it sign} in the white box.}
\label{2-bridge_knots}
\end{figure}

Note that for a 2-bridge knot $[a_1,\cdots,a_r]$ the sign of twist 
\begin{minipage}{0.6cm}\includegraphics[width=\hsize]{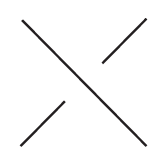}\end{minipage} 
in the white box $a_i$ is positive for $i$ odd, negative for $i$ even. 

\begin{figure}[hbtp]
\[
\psfrag{2n}{$m$} \psfrag{a}{$a$} \psfrag{b}{$b$} \psfrag{ap}{$a'$}
\psfrag{by}{$y_*$} \psfrag{y}{$y'_*$} 
K_{m}=[-2,-m]=\begin{minipage}{7cm}\includegraphics[width=\hsize]{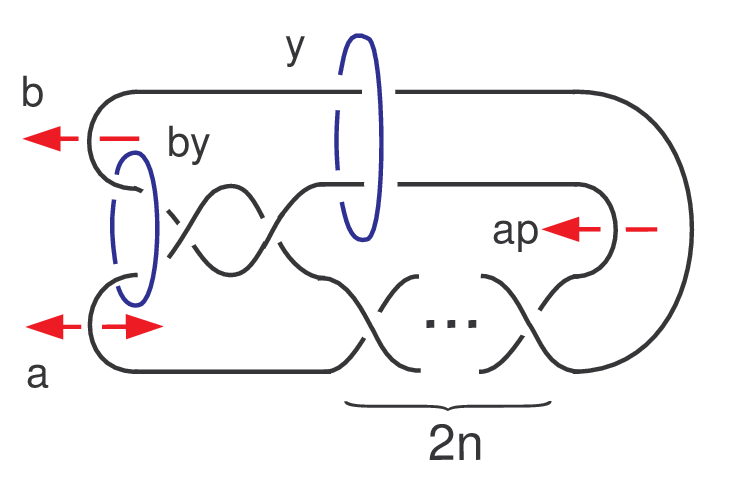}\end{minipage}
\]
\caption{The $m$-twist knot $K_{m}=[-2,-m]=\fb(2m+1,-m)$ and loops $a$, $a'$, $b$, 
$y_*$ and $y'_*$. 
The orientation of $a$ is given by the right arrow for $m$ odd, by the left arrow for $m$ even.}
\label{twist_knots}
\end{figure}

By {\it Wirtinger's algorithm} and {\it Tietze transformations} 
(reductions of generators and relations), 
we have the following presentation of the 2-bridge knot group $G(\fb(p,q))$: 
\[ G(\fb(p,q))=\la a,b \mid wa=bw \ra, \]
where $a,b$ are {\it meridians} shown in Figure \ref{2-bridge_knots}, 
$w=a^{\ve_1}b^{\ve_2} \dots a^{\ve_{p-2}}b^{\ve_{p-1}}$ and 
$\ve_j=(-1)^{\lfloor jq/p \rfloor}$. 
Here $\lfloor s \rfloor$ denotes the maximal integer $m$ satisfying $m \leq s$. 

By \cite{Le}, the character variety $X(\fb(p,q)):=X(G(\fb(p,q)))$ is given as 
the algebraic set defined by 
\[
\tr(bwa^{-1})-\tr(w)=0. 
\]
For example, in the case where $m=2n$ $(n >0)$, this equation induces the following presentation of 
$X(K_{2n}):=X(G(K_{2n}))$. 
For the twist knot $K_{2n}=\fb(4n+1,-2n)$, we have $w=u^n$ where $u=a^{-1}b^{-1}ab$. 
Let $x:=\tr(a)=\tr(b)$, $y:=\tr(ab^{-1})=\tr(y_*)$ 
and $t:=t(x,y)=\tr u=\tr(a^{-1}b^{-1}ab)=y^2-yx^2+2x^2-2$. 
We first focus on the Chebyshev polynomials $S_n(t)$. 
The following lemma is standard, see e.g. \cite[Lemma 2.2]{Tr}.
\begin{lemma}
Suppose the polynomials $f_n$ $(n \in \BZ)$ in $\BC[x,y]$ satisfy 
the recurrence relation 
$f_{n+1}=tf_n-f_{n-1}$. Then $f_n=f_0S_n(t)-f_{-1}S_{n-1}(t)$ holds.
\label{chev}
\end{lemma}
Applying Lemma \ref{chev} to the defining polynomial $\tr bwa^{-1} - \tr w$, we have 
\begin{eqnarray*}
\tr bwa^{-1} - \tr w &=& \tr bu^na^{-1} - \tr u^n\\
&=& (\tr ba^{-1}-\tr 1)S_n(t)-(\tr bu^{-1}a^{-1}-\tr u^{-1})S_{n-1}(t)\\
&=& (y-2)S_n(t)-(y-t)S_{n-1}(t)\\
&=& (y-2)\left(S_n(t)+(y+1-x^2)S_{n-1}(t)\right),
\end{eqnarray*}
since $t-y=(y-2)(y+1-x^2)$. 
Let $L_n(x,y)$ be the resulting polynomial, that is, 
\[
L_{n}(x,y)=(y-2)\left(S_n(t)+(y+1-x^2)S_{n-1}(t)\right). 
\]
Then the character variety $X(K_{2n})$ is given by 
\begin{eqnarray}\label{Ln}
X(K_{2n})=\{(x,y) \in \BC \mid L_n(x,y)=0\}. 
\end{eqnarray}

Similarly, for the twist knot $K_{2n-1}=\fb(4n-1,-2n+1)$, where $n>0$, let $u=a^{-1}b^{-1}ab$. 
Then we have $w=u^{n-1}a^{-1}b^{-1}$. 
In this case, we put $x=\tr(a)=\tr(b)$, $y=\tr(ab)=\tr(y_*)$ and 
$t:=t(x,y)=\tr(u)=\tr(a^{-1}b^{-1}ab)=y^2-yx^2+2x^2-2$. 
As in the case of $K_{2n}$, it follows from Lemma \ref{chev} that  
\begin{eqnarray*}
\tr bwa^{-1} - \tr w &=& \tr bv^{n-1}a^{-1}b^{-1}a^{-1}-\tr v^{n-1}a^{-1}b^{-1}\\
&=& (\tr ba^{-1}b^{-1}a^{-1}-\tr a^{-1}b^{-1})S_{n-1}(t)\\
&&-(\tr bv^{-1}a^{-1}b^{-1}a^{-1}-\tr v^{-1}a^{-1}b^{-1})S_{n-2}(t)\\
&=& ((\tr a^{-1})^2-\tr bab^{-1}a^{-1}-\tr ab)S_{n-1}(t)-(\tr a^{-2}-\tr b^{-1}a^{-1})S_{n-2}(t)\\
&=& (x^2-t-y)S_{n-1}(t)-(x^2-2-y)S_{n-2}(t)\\
&=& (x^2-y-2)\left((y-1)S_{n-1}(t)-S_{n-2}(t)\right).
\end{eqnarray*}
Let $L'_n(x,y)$ be the above resulting polynomial, that is,
\[
L'_n(x,y)=(x^2-y-2)\left((y-1)S_{n-1}(t)-S_{n-2}(t)\right). 
\]
Then we obtain a presentation of the character variety $X(K_{2n-1})$:
\begin{eqnarray}\label{L'n}
X(K_{2n-1})=\{(x,y) \in \BC^2 \mid L'_n(x,y)=0\}. 
\end{eqnarray}

On the other hand, we can also calculate the character varieties 
using {\it the Kauffman bracket skein module} (KBSM). 
(Refer to \cite{Bullock,P1,P2,PS}.) 
For any non-negative integer\footnote{For a negative integer $-m$ ($m>1$), 
taking the mirror image of $K_{-m}$ and arranging it, we obtain $X(K_{-m})=X(K_{m-1})$. 
In this case, $R_{-m}(x,y)$ will shift to $R_{m-1}(x,y)$.} $m$, 
let $R_m(x,y)$ be the polynomial in $\BC[x,y]$ defined by 
\[
R_m(x,y)=(y+2)\left(S_{m}(y)-S_{m-1}(y)+x^2\sum_{i=0}^{m-1}S_i(y)\right), 
\]
and let $\widetilde{R}_m(x,y)$ be the second factor of $R_m(x,y)$. 
It follows from \cite{GN} using the KBSM that $X(K_m)$ is described as 
\begin{eqnarray}\label{Rm}
X(K_m)=\left\{ (x,y)\in \BC^2 \mid R_m(x,y)=0 \right\}, 
\end{eqnarray}
where $x=-\tr(a')=-\tr(b)$ and $y=-\tr(a'b^{-1})=-\tr(y'_*)$. 
By definition, the algebraic sets in \eqref{Ln} and \eqref{Rm} at $m=2n$, 
and also the algebraic sets in \eqref{L'n} and \eqref{Rm} at $m=2n-1$ are isomorphic 
as algebraic sets. 

Expressions of $X(G)$ are sometimes quite important when we discuss geometric properties of $X(G)$. 
For example, the presentation in \eqref{Rm} is more convenient than that of \eqref{Ln} or \eqref{L'n} 
to research the number of irreducible components of $X(K_m)$ 
at least in the sense that the presentation does not depend on $m$ even or odd. 
Furthermore, we have the following from the expression in \eqref{Rm}. 

\begin{proposition}[cf. \cite{Burde, MPL}]\label{main_thm}
For any positive integer $m$, $\widetilde{R}_{m}(x,y)$ is irreducible over $\BC$. 
Therefore, $X(K_m)$ consists of two irreducible components. 
\end{proposition}

\begin{proof}
By the same argument in \cite{N2}, the factor $\widetilde{R}_m(x,y)$ 
\begin{eqnarray*}
\widetilde{R}_m(x,y)&=&S_m(y)-S_{m-1}(y)+x^2\sum_{i=0}^{m-1}S_i(y)\\
&=&S_m(y)-S_{m-1}(y)+x^2\frac{S_m(y)-S_{m-1}(y)-1}{y-2}
\end{eqnarray*}
cannot be factorized as $(h_1x+h_2)(h_3x+h_4)$ where $h_j\in \BC[y]$.  
Moreover, $(h_1x^2+h_2)h_3$ where $h_j\in \BC[y]$ cannot occur either 
as a factorization of $\widetilde{R}_m(z,y)$, since $S_m(y)-S_{m-1}(y)$ and 
$(S_m(y)-S_{m-1}(y)-1)/(y-2)$ are relatively prime in $\BC[y]$. 
Hence $\widetilde{R}_m(x,y)$ is irreducible in $\BC[x,y]$, 
concluding Proposition \ref{main_thm}. 
\end{proof}

The above method naturally leads us to Proposition \ref{irred}. 
\begin{proposition}
Suppose $\Phi(x,z)=f(z)+x^2g(z)$ is a polynomial in $\BC[x,z]$ such that 
$\deg f - \deg g$ is an odd number, and $f(z)$ and $g(z)$ are 
relatively prime in $\BC[z]$. Then $\Phi(x,z)$ is irreducible in $\BC[x,z]$. 
\label{irred}
\end{proposition}

\begin{proof}
A basic argument shows Proposition \ref{irred} likewise. 
Assume that $\Phi(x,z)$ is reducible in $\BC[x,z]$. Since $\gcd(f(z),g(z))=1$, 
we must have 
\begin{equation}
\Phi(x,z)=\left(h_1(z)+xh_2(z)\right)\left(h_3(z)+xh_4(z)\right), 
\label{h}
\end{equation}
where $h_j$'s are polynomials in $\BC[z]$. Eq. \eqref{h} is equivalent to
\[
f(z)=h_1(z)h_3(z), \quad 0=h_1(z)h_4(z)+h_2(z)h_3(z), \quad g(z)=h_2(z)h_4(z).
\]
So it follows that
\[
\deg f =\deg h_1+\deg h_3, \quad \deg h_1+\deg h_4=\deg h_2+\deg h_3, 
\quad \deg g=\deg h_2+\deg h_4.
\]
Hence 
\[
\deg f-\deg g=(\deg h_1-\deg h_2)+(\deg h_3-\deg h_4)=2(\deg h_1-\deg h_2)
\] 
is an even number, a contradiction. This proves Proposition \ref{irred}. 
\end{proof}

Proposition \ref{irred} can determine the number of irreducible components 
of the character varieties for 2-bridge knots other than twist knots. 
For example, we focus on the 2-bridge knot $\fb(p,3)$, where $p>3$ and $\gcd(p,3)=1$. 
Again, since $\fb(p,3)$ is a knot, $p$ is odd. 

\begin{figure}[hbtp]
\[
\psfrag{p=3m+1}{$p=3m+1$} \psfrag{p=3m+2}{$p=3m+2$} \psfrag{m}{$m$}
\begin{minipage}{7.5cm}\includegraphics[width=\hsize]{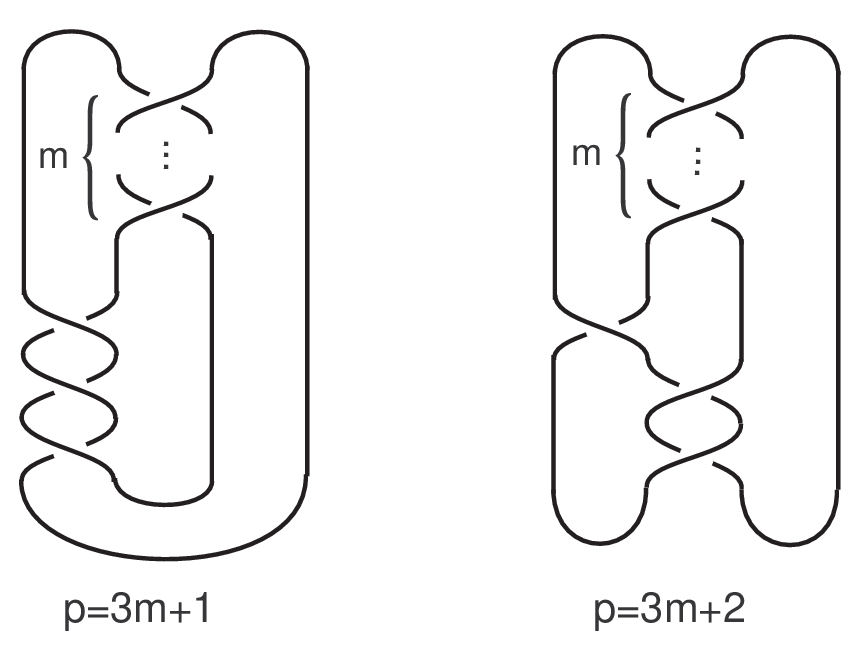}\end{minipage}
\]
\caption{The 2-bridge knot $\fb(p,3)$ with $p>3$, $\gcd(p,3)=1$. 
Since $\fb(p,3)$ is a knot, $m$ is even for $p=3m+1$ and $m$ is odd for $p=3m+2$.}
\label{bp3}
\end{figure}

As shown in Figure \ref{bp3}, 
$\fb(3m+1,3)$ is {\it a twist knot} ($m=2$) or {\it a double twist knot}, 
and $\fb(3m+2,3)$ is a twist knot ($m=1$) or a knot other than these types.
For the word $w$ in the relation of $G(\fb(p,3))$, 
let $\Phi_{w}(x,z)$ be the polynomial
\begin{eqnarray*}
\Phi_{w}(x,z) &=& S_{d}(z)-S_{d-1}(z)+ x^2(2-z) S_{d-\ell-1}(z) 
S_{\ell-1 - \lfloor \frac{\ell}{2}\rfloor}(z)
\left( S_{\lfloor \frac{\ell}{2}\rfloor}(z)
-S_{\lfloor \frac{\ell}{2}\rfloor -1}(z) \right),
\end{eqnarray*}
where $d=\frac{p-1}{2}$ and $\ell=\lfloor \frac{p}{3}\rfloor$. 
(The original definition of $\Phi_{w}(x,z)$ will appear in Section \ref{sec_bp3}.)

\begin{theorem}[Theorem \ref{thm_irr_bp3} in Section \ref{sec_bp3}, 
c.f. \cite{Burde}]\label{thm_bp3}
$X(\fb(p,3))$ with $p>3$ and $\gcd(p,3)=1$ is presented as the algebraic set defined by 
$(z+2-x^2)\Phi_{w}(x,z)=0$. 
Then $\Phi_{w}(x,z)$ is irreducible over $\BC$ and thus 
$X(\fb(p,3))$ consists of exactly two irreducible components. 
\end{theorem}

It is studied in \cite{MPL} that most double twist knots have exactly 
two irreducible components in their character varieties. 
This fact is shown by using tools in algebraic geometry. 
On the other hand, our proofs of Theorem \ref{thm_bp3} 
(i.e., Proposition \ref{m=3} and Theorem \ref{thm_irr_bp3}) 
use only basic calculations on the Chebyshev polynomials $S_n(z)$. 
This would indicate more or less an efficiency of the Chebyshev polynomials 
in the calculations of the character varieties, 
though the proofs cannot avoid laborious calculations 
(see Section \ref{sec_bp3}). 

The number of irreducible components of $X(K)$ is quite interesting 
in the sense that these results determine minimal elements for 
the partial order on the set of prime knots in $\S^3$ defined as follows. 
Let $K$ and $K'$ be prime knots in $\S^3$ which are {\it non-trivial}, 
i.e., they cannot bound embedded disks in $\S^3$. 
Then we write $K \geq K'$ if there exists an epimorphism 
(a surjective group homomorphism) from $G(K)$ onto $G(K')$. 
This defines a partial order on the set of prime knots (refer to \cite{KS}). 
We can apply the following theorem to the partial order $\geq$. 

\begin{theorem}[Theorem 4.4 in \cite{BBRW}, cf. Appendix in \cite{N2}, Corollary 7.1 in \cite{ORS}]
\label{thm-bbrw}
Suppose $K\subset \S^3$ is a hyperbolic knot in $\S^3$ 
such that $X(K)$ of $K$ has only one irreducible component 
that contains the characters of irreducible representations. 
Then $G(K)$ does not surject onto the knot group of any other non-trivial knot.
\end{theorem}

Combining Theorems \ref{main_thm} and \ref{thm-bbrw}, we obtain the following corollary. 

\begin{corollary}\label{cor_minimal}
For any positive integer\footnote{According to the property of $R_m(x,y)$ under the mirror image 
mentioned before, this naturally extends to any negative integer $m < -2$.} $m > 1$, 
at which the $m$-twist knot $K_m$ is hyperbolic, 
$K_m$ is a minimal element for the partial order $\geq$. 
\end{corollary}

Note that Corollary \ref{cor_minimal} also holds for $m=1$ (\cite{KS}), 
where $K_1$ is the trefoil knot (i.e., a non-hyperbolic knot). 
The first author has shown this corollary in the case where $2m+1$ is prime (\cite{N2}). 
Similarly, we can apply Theorems \ref{thm_bp3} and \ref{thm-bbrw} to get the following.

\begin{corollary}\label{cor_bp3}
The 2-bridge knot $\fb(p,3)$ satisfying $p>3$ and $\gcd(p,3)=1$, where it is hyperbolic, 
is a minimal element for the partial order $\geq$. 
\end{corollary}

Corollaries \ref{cor_minimal} and \ref{cor_bp3} also show the minimality 
of twist knots $K_m$ and the 2-bridge knots $\fb(p,3)$ 
with respect to the partial order 
introduced by Silver and Whitten \cite{SW} (see also \cite{HS}). 

To avoid a complicated organization for readers, we do put every laborious calculation 
in the rest of the paper. 
So, in the following section, we concentrate our focus on the calculations for 
\begin{itemize}
\item presentations of the character variety $X(\fb(p,3))$ with $p>3$ and $\gcd(p,3)=1$ 
using the Chebyshev polynomials $S_n(z)$ (Subsection \ref{subsec_31}) and, 

\item a proof of Theorem \ref{thm_bp3} using Chebyshev polynomials $S_n(z)$ 
(Subsection \ref{subsec_32}).  
\end{itemize}


\section*{Acknowledgement}
The first author had been partially supported by 
MEXT KAKENHI for Young Scientists (B) Grant Number 22740048 
and has been partially supported by JSPS KAKENHI for Young Scientists (B) 
Grant Number 26800046. 
The second author would like to thank T.T.Q. Le for helpful discussions.


\section{Character variety of $\fb(p,3)$ using $S_n(z)$: a proof of Theorem \ref{thm_bp3}}
\label{sec_bp3}
The proof of Theorem \ref{thm_bp3} consists of two parts; 
giving a description of $X(\fb(p,3))$ (Proposition \ref{m=3}) and 
the irreducibility of the polynomial $\Phi_w(x,z)$ defined below (Theorem \ref{thm_irr_bp3}), 
which describes the main body of $X(\fb(p,3))$. We state these results first. 

For the knot group $G(\fb(p,m))=\langle a,b \mid aw=wb \rangle$ of the 2-bridge knot $\fb(p,m)$, 
let $z=\tr(ab)$ and $d=(p-1)/2$. 
In general, it follows from \cite{Le} that the polynomial $\tr(bwa^{-1})-\tr(w)$, 
whose zero set coincides with the character variety $X(\fb(p,m))$, 
is described by 
\[
\tr(bwa^{-1})-\tr(w)=(z+2-x^2)\Phi_w(x,z), 
\]
where $\Phi_w(x,z)$ is the polynomial in $\BC[x,z]$ defined by 
\[
\Phi_w(x,z)=\tr w-\tr w'+\dots+(-1)^{d-1}\tr w^{(d-1)}+(-1)^d.
\] 
Here if $u$ is a word, then $u'$ denotes the word obtained from $u$ 
by deleting the two letters at the two ends. 
In general, $u^{(d-1)}$ denotes the element obtained from $u$ by applying the deleting operation 
$d-1$ times. 

In the case of $\fb(p,3)$ satisfying $p>3$ and $\gcd(p,3)=1$, 
we can describe more precisely the polynomial $\Phi_w(x,z)$ 
using the Chebyshev polynomials $S_n(z)$ as follows. 

\begin{proposition}
For the $2$-bridge knot $\fb(p,3)$ with $p>3$ and $\gcd(p,3)=1$, one has 
\begin{eqnarray*}
\Phi_{w}(x,z) &=& S_{d}(z)-S_{d-1}(z)+ x^2(2-z) S_{d-\ell-1}(z) 
S_{\ell-1 - \lfloor \frac{\ell}{2}\rfloor}(z)
\left( S_{\lfloor \frac{\ell}{2}\rfloor}(z)
-S_{\lfloor \frac{\ell}{2}\rfloor -1}(z) \right),
\end{eqnarray*}
where $\ell=\lfloor \frac{p}{3}\rfloor$.
\label{m=3}
\end{proposition}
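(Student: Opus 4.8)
The plan is to evaluate Le's alternating sum $\Phi_w=\sum_{k=0}^{d-1}(-1)^k\tr(w^{(k)})+(-1)^d$ directly, using the rigid three-block shape that $q=3$ forces on $w$. The first step is to pin down the signs $\ve_j=(-1)^{\lfloor 3j/p\rfloor}$. Since $\gcd(p,3)=1$ one checks the palindrome relation $\ve_j=\ve_{p-j}$ and that $\ve_j=+1$ for $1\le j\le \ell$, $\ve_j=-1$ for $\ell+1\le j\le p-1-\ell$, and $\ve_j=+1$ for $p-\ell\le j\le p-1$, where $\ell=\lfloor p/3\rfloor$. Thus $w$ consists of a positive block of length $\ell$, a negative block of even length $m=p-1-2\ell=2(d-\ell)$, and a positive block of length $\ell$. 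Because each deletion strips one letter from each end, every $w^{(k)}$ has even length and, by the palindrome relation, its first and last letters share the sign $\ve_{k+1}=\ve_{p-1-k}$; this is what keeps the trace computations manageable.

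I would then split the sum at $k=\ell$. For $\ell\le k\le d-1$ the word $w^{(k)}$ sits entirely inside the negative block, so it is $(ab)^{\mp(d-k)}$ up to swapping the roles of $a,b$, and Cayley--Hamilton gives $\tr(w^{(k)})=S_{d-k}(z)-S_{d-k-2}(z)$. Re-indexing by $r=d-k$ and telescoping (with $S_{-1}=0$, $S_0=1$) collapses this ``core'' part together with the trailing $(-1)^d$ into the $x$-free term $(-1)^\ell\bigl(S_{d-\ell}(z)-S_{d-\ell-1}(z)\bigr)$. This already accounts for the index $d-\ell-1$ appearing in the statement and is the routine half of the argument.

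The real work is the ``shell'' sum over $0\le k\le \ell-1$, where $w^{(k)}=P_j\,N\,P'_j$ with two positive blocks of common length $j=\ell-k$ flanking the fixed negative core $N$. Writing the positive blocks as powers of $ab$ and the negative core as a power of $ba$ (note $a^{-1}b^{-1}=(ba)^{-1}$), I would expand each trace using the $\SL_2(\BC)$-trace identity together with $M^s=S_{s-1}(z)M-S_{s-2}(z)I$ for $M\in\{ab,ba\}$. The meridian variable $x$ enters only through the failure of $ab$ and $ba$ to commute, governed by $\tr(a^2b^2)=(z-2)x^2+2$; this forces $\tr(w^{(k)})$ to be of degree two in $x$, with $x^2$-coefficient equal to $(z-2)$ times a product of Chebyshev polynomials whose indices are read off from the block lengths. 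Separating the $x^0$ and $x^2$ parts and feeding the resulting recursions in $j$ through Lemmas \ref{S}, \ref{S0} and \ref{S2} should telescope the shell sum to closed form.

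Assembling the pieces, the $x$-free contributions of the shell together with the core term $(-1)^\ell(S_{d-\ell}-S_{d-\ell-1})$ should recombine, by one more Chebyshev identity, into $S_d(z)-S_{d-1}(z)$, while the $x^2$ contributions should factor as the stated $x^2(2-z)\,S_{d-\ell-1}(z)\,S_{\ell-1-\lfloor\ell/2\rfloor}(z)\bigl(S_{\lfloor\ell/2\rfloor}(z)-S_{\lfloor\ell/2\rfloor-1}(z)\bigr)$. I expect the main obstacle to be precisely this last factorization of the $x^2$-coefficient: the parity of the block length $j$ changes the leftover meridian at each junction, so the calculation splits into the cases $\ell$ even and $\ell$ odd (equivalently $p\equiv1$ and $p\equiv2\pmod 3$), and the alternating sum over $j=1,\dots,\ell$ is what produces the floor $\lfloor\ell/2\rfloor$. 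Recognizing the resulting sum of products of $S_n$'s as the compact product in the statement---with $S_{d-\ell-1}$ coming from the $ba$-part of the core and $S_{\ell-1-\lfloor\ell/2\rfloor}(S_{\lfloor\ell/2\rfloor}-S_{\lfloor\ell/2\rfloor-1})$ from the parity-weighted shell sum---will be the step requiring the most care.
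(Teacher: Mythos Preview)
Your plan is sound and would yield the proposition, but it follows a different route from the paper. You propose to compute each $\tr w^{(k)}$ in the ``shell'' range $0\le k\le\ell-1$ directly, by writing the word as a product of three blocks and expanding with Cayley--Hamilton; this forces you to track the $x^2$--content of every term and then sum. The paper avoids this by invoking a recursive lemma (Lemma~\ref{LT}, taken from \cite[Proposition~A.3]{LT}) which, with the auxiliary words $u_j=w_{j+1}a^{\ve_j}$ and $v_j=b^{\ve_j}w_{j+1}$, shows that for $1\le j\le\ell-1$ the traces satisfy the clean recurrence $\tr w_j=z\,\tr w_{j+1}-\tr w_{j+2}$ (no $x$ appears), so that $\tr w_j=S_{\ell-j}(z)\tr w_\ell-S_{\ell-1-j}(z)\tr w_{\ell+1}$. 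Thus in the paper the $x^2$--dependence enters at a single step, the transition $j=\ell$, and the $x^2$--coefficient of $\Phi_w$ is simply the $x^2$--part of $\tr w_\ell$ (your $R(z)$) times the alternating sum $\sum_{j=1}^{\ell}(-1)^{j-1}S_{\ell-j}(z)$ (your $Q(z)$), with the floor arising from the even/odd split of that single Chebyshev sum rather than from a parity case split on the block length at each junction. The paper also shortcuts the $x^0$--part by quoting $\Phi_w(0,z)=S_d(z)-S_{d-1}(z)$ from \cite{Le}, whereas your plan rederives it by recombining the core value $(-1)^\ell(S_{d-\ell}-S_{d-\ell-1})$ with the constant parts of the shell. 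Both arguments work; the paper's recursion localizes the bookkeeping, while your block expansion is more explicit about where each Chebyshev factor originates.
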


This presentation of $\Phi_w(x,z)$ leads us to one of the main results in this paper. 

\begin{theorem}\label{thm_irr_bp3}
For the $2$-bridge knot $\fb(p,3)$ with $p>3$ and $\gcd(p,3)=1$, 
$\Phi_w(x,z)$ is irreducible in $\BC[x,z]$. 
\end{theorem}

In the rest of this section, we first show Proposition \ref{m=3}. 
Then we prove Theorem \ref{thm_irr_bp3} by Propositions \ref{irred} and \ref{m=3} 
and Lemmas \ref{roots} to \ref{gcd2} shown below. 


\subsection{Proof of Proposition \ref{m=3}}\label{subsec_31}
At first, we consider the general case $\fb(p,m)$. For $j=1, \dots, d$, let 
\[
w_j=a^{\ve_j}b^{\ve_{j+1}} \dots a^{\ve_{2d-j}}b^{\ve_{2d+1-j}}.
\] 
Then $w_1=w$ and $w_{j+1}=(w_j)'=w^{(j)}$. 
Let $u_j:=w_{j+1}a^{\ve_j}$ and $v_j:=b^{\ve_j}w_{j+1}$ 
for $j=1, \dots, d$, where $w_{d+1}:=1$. 

\begin{lemma} 
\begin{enumerate}
\item If $\ve_{j}=\ve_{j+1}$, then \begin{eqnarray*}
\tr w_j &=& z \tr w_{j+1} - \tr w_{j+2},\\
x \tr u_j &=& x^2 \tr w_{j+1}-x \tr u_{j+1},\\
x \tr v_j &=& x^2 \tr w_{j+1}-x \tr v_{j+1}.
\end{eqnarray*}

\item If $\ve_{j}=-\ve_{j+1}$, then \begin{eqnarray*}
\tr w_j=(z-x^2) \tr w_{j+1}- \tr w_{j+2}+x \tr u_{j+1}+x \tr v_{j+1}. 
\end{eqnarray*}
\end{enumerate}
\label{LT}
\end{lemma}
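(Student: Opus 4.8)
The plan is to derive all four relations directly from the $\SL_2(\BC)$ trace identity $\tr(AB)=\tr(A)\tr(B)-\tr(AB^{-1})$, together with cyclic invariance, the identity $\tr(M)=\tr(M^{-1})$, and the elementary evaluations $\tr(a^{\pm1})=\tr(b^{\pm1})=x$, $\tr(a^{\pm1}b^{\pm1})=z$ (equal signs), and $\tr(a^{\pm1}b^{\mp1})=x^2-z$ (opposite signs). Before touching the recursion I would record two symmetries that hold \emph{because} $\tr(a)=\tr(b)=x$ and $\tr(ab)=\tr(a^{-1}b^{-1})=z$: the letter swap $\sigma\colon a\leftrightarrow b$, and word reversal $\phi\colon \ell_1\cdots\ell_k\mapsto \ell_k\cdots\ell_1$. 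Both preserve the trace of every word in $a,b$ (reversal because it equals inversion composed with the generator-inverting automorphism, and each of these fixes $x$ and $z$). I would also use the palindromic symmetry $\ve_j=\ve_{p-j}$, valid since $q$ is odd so that $\lfloor jq/p\rfloor+\lfloor (p-j)q/p\rfloor=q-1$ is even. This has two consequences that drive everything: first, in the peeling $w_j=(\text{first letter})\,w_{j+1}\,(\text{last letter})$ the two outer exponents are \emph{both} $\ve_j$, so moving them to the front by cyclicity yields a pair of trace $z$; second, it gives $\sigma(w_j)=\phi(w_j)$, hence $\tr(w_{j}a^{\pm1})=\tr(w_{j}b^{\pm1})$.

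For Case 1 ($\ve_j=\ve_{j+1}=:s$) I would write $\tr w_j=\tr((\text{outer pair})\,w_{j+1})=z\,\tr w_{j+1}-\tr(\,\cdots w_{j+1}^{-1})$; peeling $w_{j+1}$ and using $s'=s$ makes the inner exponents cancel, so the residual term collapses to $\tr(w_{j+2}^{-1})=\tr w_{j+2}$, which is the first relation. For $u_j=w_{j+1}a^{s}$ and $v_j=b^{s}w_{j+1}$ I would split off a factor $\tr(a^{s})=x$ via the identity, so that $\tr u_j=x\,\tr w_{j+1}-\tr(w_{j+1}a^{-s})$; after peeling $w_{j+1}$ the remainder is a trace of $w_{j+2}$ against a single generator, and here I invoke the swap symmetry $\tr(w_{j+2}a^{s})=\tr(w_{j+2}b^{s})$ to identify it with $\tr u_{j+1}$ (the $v_j$ relation is the $\sigma$-mirror image). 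Multiplying by $x$ gives the stated forms.

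Case 2 ($\ve_j=-\ve_{j+1}$) is where the real work lies. The same opening gives $\tr w_j=z\,\tr w_{j+1}-\tr(b^{s}a^{s}w_{j+1}^{-1})$, but now $s'=-s$, so peeling $w_{j+1}$ makes the inner exponents \emph{reinforce}, producing $a^{2s}$ and $b^{2s}$. I would remove these squares by Cayley--Hamilton, $M^2=(\tr M)M-I=xM-I$, expanding $b^{2s}a^{2s}=(xb^{s}-I)(xa^{s}-I)$ into four traces. The quadratic cross term regenerates $x^2\tr(b^{s}a^{s}w_{j+2}^{-1})=x^2\tr w_{j+1}$, which is exactly what converts the leading coefficient $z$ into $z-x^2$; the constant term gives $-\tr w_{j+2}$; and the two linear terms $x\tr(a^{s}w_{j+2}^{-1})$ and $x\tr(b^{s}w_{j+2}^{-1})$ become, after $\tr(M)=\tr(M^{-1})$ and a cyclic move, precisely $x\tr u_{j+1}$ and $x\tr v_{j+1}$ with $\ve_{j+1}=-s$. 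Collecting the four pieces yields the Case 2 formula.

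The step I expect to be the main obstacle is the index-and-parity bookkeeping rather than any one manipulation. Because the literal subwords alternate, $w_{j+1}$ begins with the generator opposite to $w_j$, whereas the definitions $u_j=w_{j+1}a^{\ve_j}$ and $v_j=b^{\ve_j}w_{j+1}$ distinguish $a$ and $b$ asymmetrically; keeping the exponents straight forces either a split into $j$ even versus odd or a systematic use of $\sigma$ and $\phi$ to normalize each word. The two most delicate points are: in Case 1, realizing that the leftover single-generator trace only matches $\tr u_{j+1}$ (resp.\ $\tr v_{j+1}$) \emph{after} applying the swap symmetry; and in Case 2, checking that the two linear traces are genuinely $\tr u_{j+1}$ and $\tr v_{j+1}$ with the correct sign $\ve_{j+1}=-s$, rather than their swapped or inverted cousins. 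It is exactly the symmetries and the palindromicity recorded at the outset that license both identifications.
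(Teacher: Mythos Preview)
Your argument is correct. The paper does not actually prove this lemma; it simply cites \cite[Proposition~A.3]{LT}. Your direct derivation via the trace identity, cyclic invariance, and Cayley--Hamilton is therefore a self-contained proof rather than a paraphrase. The ingredients you isolate are exactly the right ones: the palindromic symmetry $\ve_j=\ve_{p-j}$ (valid since $q$ is odd) forces the two outer exponents of $w_j$ to coincide, so that the cyclically adjacent pair has trace $z$; in Case~1 the inner exponents cancel and the residual single-generator trace is identified with $\tr u_{j+1}$ (resp.\ $\tr v_{j+1}$) via the trace-preserving swap $\sigma$; and in Case~2 the expansion $b^{2s}a^{2s}=(xb^{s}-I)(xa^{s}-I)$ against $w_{j+2}^{-1}$ produces precisely the four terms $(z-x^2)\tr w_{j+1}$, $-\tr w_{j+2}$, $x\tr u_{j+1}$, and $x\tr v_{j+1}$, with the correct exponent $\ve_{j+1}=-s$ appearing automatically after taking inverses. (Incidentally the swap symmetry also gives $\tr u_{j+1}=\tr v_{j+1}$ in general, though you do not need this.)

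One small wording point: with the paper's conventions every $w_j$ literally begins with $a$ and ends with $b$; it is the deletion $(w_j)'$ that begins with $b$, and one has $(w_j)'=\sigma(w_{j+1})$. This is exactly where your swap $\sigma$ is doing work in Case~1, and you invoke it correctly---the sentence ``$w_{j+1}$ begins with the generator opposite to $w_j$'' should just be read with $(w_j)'$ in place of $w_{j+1}$.
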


\begin{proof}
See \cite[Proposition A.3]{LT}.
\end{proof}

We apply Lemma \ref{LT} to describe $\tr w_j$ in $\Phi_w(x,z)$ by $S_n(z)$'s. 
For the 2-bridge knot $\fb(p,3)$, 
we can check that $\ve_j=1$ if $j \le \ell$ and $\ve_j=-1$ if $ \ell +1 \le j \le d$, 
where $\ell=\lfloor \frac{p}{3} \rfloor$.

\underline{\it Case 1}: 
$\ell +1 \le j \le d$. Since $\ve_j=\ve_{j+1}$, by Lemma \ref{LT},
\begin{eqnarray*}
\tr w_j &=& z \tr w_{j+1} - \tr w_{j+2},\\
x \tr u_j &=& x^2 \tr w_{j+1}-x \tr u_{j+1},\\
x \tr v_j &=& x^2 \tr w_{j+1}-x \tr v_{j+1}. 
\end{eqnarray*}
Note that $\tr w_d = \tr a^{\ve_d}b^{\ve_{d+1}}=\tr a^{\ve_d} b^{\ve_d}=z$ 
and $\tr w_{d+1}=\tr 1=2$. Applying the above equations recursively, we obtain 
\begin{eqnarray*}
\tr w_j &=& T_{d+1-j}(z),\\
x \tr u_j &=& x^2 \left( \tr w_{j+1} -\tr w_{j+2} + \dots 
+ (-1)^{d-1-j} \tr w_{d} \right) + (-1)^{d-j}x \tr u_d\\
&=& x^2 \left(  T_{d-j}(z)- T_{d-1-j}(z) + \dots 
+ (-1)^{d-1-j} T_1(z) + (-1)^{d-j}\right),\\
x \tr v_j &=& x^2 \left( \tr w_{j+1} -\tr w_{j+2} + \dots 
+ (-1)^{d-1-j} \tr w_{d} \right) + (-1)^{d-j}x \tr v_d\\
&=& x^2 \left(  T_{d-j}(z)- T_{d-1-j}(z) + \dots 
+ (-1)^{d-1-j} T_1(z) + (-1)^{d-j}\right),
\end{eqnarray*}
where $T_n(z)$ ($\forall n \in \BZ$) are the Chebyshev polynomials 
defined by $T_0(z)=2,\, T_1(z)=z$ and $T_{n+1}(z)=zT_n(z)-T_{n-1}(z)$.
In particular,
\begin{eqnarray*}
\tr w_{\ell+1} &=& T_{d-\ell}(z),\\
x \tr u_{\ell+1} &=& x \tr v_{\ell+1} 
= x^2 \left( T_{d-1-\ell}(z)- T_{d-2-\ell}(z) + \dots 
+ (-1)^{d-\ell-2} T_1(z) + (-1)^{d-\ell-1} \right).
\end{eqnarray*}

\underline{\it Case 2}: 
$1 \le j \le \ell-1$. Since $\ve_j=\ve_{j+1}$, by Lemma \ref{LT}, 
\[
\tr w_j= z \tr w_{j+1} - \tr w_{j+2}.
\]
It follows that $\tr w_j=S_{\ell-j}(z) \tr w_{\ell}-S_{\ell-1-j}(z) \tr w_{\ell+1}.$

\underline{\it Case 3}: $j=\ell$. Since $\ve_{\ell}=-\ve_{\ell+1}$, by Lemma \ref{LT},
\begin{eqnarray*}
\tr w_{\ell} &=& (z-x^2) \tr w_{\ell+1}- \tr w_{\ell +2}
+x \tr u_{\ell+1}+x \tr v_{\ell+1}\\
&=& (z-x^2) T_{d-\ell}(z) - T_{d-\ell-1}(z)\\
&& + \, 2 x^2 \left(  T_{d-1-\ell}(z)- T_{d-2-\ell}(z) + \dots 
+ (-1)^{d-\ell-2} T_1(z) + (-1)^{d-\ell-1} \right).
\end{eqnarray*}
Hence $\Phi_{w}(x,z)$ is equal to 
\begin{eqnarray*} 
&& \tr w_1 -\tr w_2+\dots+(-1)^{\ell -1} \tr w_{\ell} +(-1)^{\ell} \tr w_{\ell+1}+
\dots+ (-1)^{d-1} \tr w_d+(-1)^d\\
&=& \left( S_{\ell-1}(z) - S_{\ell-2}(z) + \dots + (-1)^{\ell -2} 
S_1(z)+(-1)^{\ell -1}S_0(z) \right)\tr w_{\ell}\\
&& - \, \left( S_{\ell-2}(z) - S_{\ell-3}(z) + \dots 
+ (-1)^{\ell -2} S_0(z)+(-1)^{\ell -1}S_{-1}(z) \right)\tr w_{\ell+1}\\
&& + \, (-1)^{\ell} \tr w_{\ell+1}+ \dots+ (-1)^{d-1} \tr w_d+(-1)^d\\
&=& P(z)+x^2Q(z)R(z),
\end{eqnarray*}
where
\begin{eqnarray*}
P(z) &=& T_{d-\ell+1}(z)\left( S_{\ell-1}(z) - S_{\ell-2}(z) 
+ \dots + (-1)^{\ell -2} S_1(z)+(-1)^{\ell -1}S_0(z) \right) \\
&& - \, \left( S_{\ell-2}(z) - S_{\ell-3}(z) + \dots 
+ (-1)^{\ell -2} S_0(z)+(-1)^{\ell -1}S_{-1}(z) \right)T_{d-\ell}(z)\\
&& + \, (-1)^{\ell} T_{d-\ell}(z)+ (-1)^{\ell+1} T_{d-\ell-1}(z)
+ \dots+(-1)^{d-1}T_1(z)+(-1)^d,\\
Q(z) &=& S_{\ell-1}(z) - S_{\ell-2}(z) + \dots 
+ (-1)^{\ell -2} S_1(z)+(-1)^{\ell -1}S_0(z),\\
R(z) &=& -T_{d-\ell}(z) +2 \left(  T_{d-1-\ell}(z)- T_{d-2-\ell}(z) 
+ \dots + (-1)^{d-\ell-2} T_1(z) + (-1)^{d-\ell-1} \right) .
\end{eqnarray*}

The following lemma gives us nice descriptions for $P(z)$, $Q(z)$ and $R(z)$. 

\begin{lemma} The followings hold.
\begin{enumerate}
\item $P(z) = S_d(z)-S_{d-1}(z)$,
\item $Q(z) = S_{\ell-1 - \lfloor \frac{\ell}{2}\rfloor}(z)
\left( S_{\lfloor \frac{\ell}{2}\rfloor}(z)
-S_{\lfloor \frac{\ell}{2}\rfloor -1}(z) \right)$,
\item $R(z) = (2-z)S_{d - \ell-1}(z)$.
\end{enumerate}
\label{PQR}
\end{lemma}

\begin{proof}
$(1)$ follows from \cite{Le} (see also \cite[Proposition A.2]{LT}) 
as $P(z)=\Phi_w(0,z)=S_d(z)-S_{d-1}(z)$. 
This can be checked directly by $T_j(z)=S_j(z)-S_{j-2}(z)$ and Lemma 4.3 in \cite{N1} 
saying that for any non-negative integers $r$ and $s$,
\[
S_r(u)S_{r+s}(u)=S_{2r+s}(u)+S_{2r+s-2}(u)+\cdots+S_{s}(u).
\]

To show $(2)$, let 
\[
\alpha_n=S_{n}(z) - S_{n-1}(z) + \dots + (-1)^{n-1} S_1(z)+(-1)^{n}S_0(z).
\]
Then $Q(z)=\alpha_{\ell-1}$.
If $n=2k$ then 
\begin{eqnarray*}
\alpha_n &=& \left( S_{2k}(z)+ \dots +S_0(z) \right) 
- \left( S_{2k-1}(z)+ \dots +S_1(z) \right)\\
&=& S_{k}(z)^2-S_{k}(z)S_{k-1}(z)= S_{k}(z) \left(S_{k}(z)-S_{k-1}(z) \right).
\end{eqnarray*}
If $n=2k+1$ then 
\begin{eqnarray*}
\alpha_n &=& \left( S_{2k+1}(z)+ \dots +S_1(z) \right) 
- \left( S_{2k}(z)+ \dots +S_0(z) \right)\\
&=& S_{k+1}(z)S_k(z)-S_{k}(z)^2= S_{k}(z) \left(S_{k+1}(z)-S_{k}(z) \right).
\end{eqnarray*}
In both cases $\alpha_n=S_{n - \lfloor \frac{n+1}{2}\rfloor}(z)
\left( S_{\lfloor \frac{n+1}{2}\rfloor}(z)-S_{\lfloor \frac{n-1}{2}\rfloor}(z) \right).$ 
Hence 
\[
Q(z)=\alpha_{\ell-1}=S_{\ell-1 - \lfloor \frac{\ell}{2}\rfloor}(z)
\left( S_{\lfloor \frac{\ell}{2}\rfloor}(z)
-S_{\lfloor \frac{\ell}{2}\rfloor -1}(z) \right).
\]

To show $(3)$, let
\begin{eqnarray*}
\beta_n &=& -T_{n+1}(z) 
+ 2 \left(  T_{n}(z)- T_{n-1}(z) + \dots + (-1)^{n-1} T_1(z) + (-1)^{n} \right). 
\end{eqnarray*}
Then $R(z)=\beta_{d-\ell-1}$. 
Note that $T_j(z)=S_j(z)-S_{j-2}(z)$. If $n=2k$ then  
\begin{eqnarray*}
\beta_{n} &=& -T_{2k+1}(z)+ 2+ 2 \left(\left( T_{2k}(z)+\dots+T_{2}(z)\right) 
-\left( T_{2k-1}(z)+\dots+T_{1}(z) \right)\right)\\
&=& -(S_{2k+1}(z)-S_{2k-1}(z))+2+2 \left( \left( S_{2k}(z)-S_0(z)\right) 
- \left( S_{2k-1}(z)-S_{-1}(z)\right)\right)\\
&=& -(S_{2k+1}(z)+S_{2k-1}(z))+2S_{2k}(z)\\
&=& (2-z)S_{2k}(z)=(2-z)S_n(z).
\end{eqnarray*}
If $n=2k+1$ then  
\begin{eqnarray*}
\beta_{n} &=& -T_{2k+2}(z)- 2+ 2 \left(\left( T_{2k+1}(z)+\dots+T_{1}(z)\right) 
-\left( T_{2k}(z)+\dots+T_{2}(z) \right)\right)\\
&=& -(S_{2k+2}(z)-S_{2k}(z))-2+2 \left( \left( S_{2k+1}(z)-S_{-1}(z)\right) 
- \left( S_{2k}(z)-S_{0}(z)\right)\right)\\
&=& -(S_{2k+2}(z)+S_{2k}(z))+2S_{2k+1}(z)\\
&=& (2-z)S_{2k+1}(z)=(2-z)S_n(z).
\end{eqnarray*}
In both cases $\beta_n=(2-z)S_n(z)$. 
Hence $R(z)=\beta_{d-\ell-1}=(2-z)S_{d - \ell-1}(z).$
\end{proof}

From Lemma \ref{PQR}, we get
\begin{eqnarray*}
\Phi_w(x,z) &=&P(z)+x^2Q(z)R(z)\\
&=&S_{d}(z)-S_{d-1}(z)+ x^2(2-z) S_{d-\ell-1}(z) 
S_{\ell-1 - \lfloor \frac{\ell}{2}\rfloor}(z)
\left( S_{\lfloor \frac{\ell}{2}\rfloor}(z)
-S_{\lfloor \frac{\ell}{2}\rfloor -1}(z) \right).
\end{eqnarray*}
This completes the proof of Proposition \ref{m=3}.


\subsection{Proof of Theorem \ref{thm_irr_bp3}}\label{subsec_32}
By Proposition \ref{m=3}, we have $\Phi_w(x,z)=P(z)+x^2Q(z)R(z)$, where 
\begin{eqnarray*}
P(z) &=& S_d(z)-S_{d-1}(z),\\
Q(z) &=& S_{\ell-1 - \lfloor \frac{\ell}{2}\rfloor}(z)
\left( S_{\lfloor \frac{\ell}{2}\rfloor}(z)
-S_{\lfloor \frac{\ell}{2}\rfloor -1}(z) \right),\\
R(z) &=& (2-z)S_{d - \ell-1}(z).
\end{eqnarray*}
Since $\deg P-\deg QR=d-\left( (\ell-1)+(d-\ell) \right)=1$ is an odd number, 
by Lemma \ref{irred}, $\Phi_w(x,z) \in \BC[x,z]$ is irreducible 
if $\gcd(P(z),Q(z)R(z))=1$. 

The following lemma is standard, see e.g. \cite{N2}. 

\begin{lemma}
For $n \ge 1$, the followings hold:
\begin{enumerate}
\item $S_n(z)$ is a monic polynomial of degree $n$ whose $n$ roots are exactly 
$2\cos\big(\frac{j}{n+1}\pi\big)$, $1\le j\le n$.
\item $S_n(z)-S_{n-1}(z)$ is a monic polynomial of degree $n$ whose $n$ roots 
are exactly $2\cos\big(\frac{2j+1}{2n+1}\pi\big)$, $0\le j\le n-1$.
\end{enumerate}
\label{roots}
\end{lemma}

\begin{lemma} 
$\gcd \left(S_d(z)-S_{d-1}(z), S_{\lfloor \frac{\ell}{2}\rfloor}(z)
-S_{\lfloor \frac{\ell}{2}\rfloor -1}(z) \right)=1$. 
\label{gcd1}
\end{lemma}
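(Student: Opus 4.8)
The plan is to use the explicit root descriptions from Lemma~\ref{roots}(2), since both factors in the gcd are differences of consecutive Chebyshev polynomials of the second kind. By that lemma, $S_d(z)-S_{d-1}(z)$ has roots exactly $2\cos\bigl(\frac{2j+1}{2d+1}\pi\bigr)$ for $0\le j\le d-1$, while $S_{\lfloor \ell/2\rfloor}(z)-S_{\lfloor \ell/2\rfloor-1}(z)$ has roots $2\cos\bigl(\frac{2k+1}{2\lfloor \ell/2\rfloor+1}\pi\bigr)$ for $0\le k\le \lfloor \ell/2\rfloor-1$. Since the cosine function is injective on $(0,\pi)$ and all the relevant angles lie in that open interval, two roots coincide if and only if the corresponding fractions are equal, i.e. $\frac{2j+1}{2d+1}=\frac{2k+1}{2\lfloor \ell/2\rfloor+1}$.

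First I would reduce the gcd question to a question about whether the two finite sets of roots are disjoint; because both polynomials are monic with no repeated roots within themselves, $\gcd=1$ over $\BC[z]$ is equivalent to the two root sets sharing no common element. Then I would translate the equality of roots into the number-theoretic condition $(2j+1)(2\lfloor \ell/2\rfloor+1)=(2k+1)(2d+1)$. The key structural fact to bring in is the relation between $d$ and $\ell$: recall $d=(p-1)/2$ and $\ell=\lfloor p/3\rfloor$, and $p,q=3$ are coprime odd with $p$ not divisible by $3$. From these I would extract an arithmetic incompatibility between the denominators $2d+1=p$ and $2\lfloor \ell/2\rfloor+1$, exploiting that $p$ is odd and coprime to the small factor coming from $\ell$.

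The cleanest route is to observe that $2d+1=p$, so the roots of $P(z)=S_d(z)-S_{d-1}(z)$ are $2\cos\bigl(\frac{(2j+1)\pi}{p}\bigr)$, i.e. $2\cos(\theta)$ where $\theta$ runs over odd multiples of $\pi/p$. A common root with the second factor would force $\frac{2k+1}{2\lfloor \ell/2\rfloor+1}=\frac{2j+1}{p}$, hence $p(2k+1)=(2j+1)(2\lfloor \ell/2\rfloor+1)$. Since $p$ is odd and $2\lfloor \ell/2\rfloor+1<p$ (as $\lfloor \ell/2\rfloor\le \lfloor p/6\rfloor<(p-1)/2=d$), and $\gcd(p,2\lfloor \ell/2\rfloor+1)=1$ must be verified, $p$ would have to divide $2j+1$; but $0\le j\le d-1$ forces $0<2j+1\le 2d-1=p-2<p$, so $p\nmid(2j+1)$, a contradiction. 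I would therefore establish $\gcd(p,2\lfloor \ell/2\rfloor+1)=1$ as the one genuine sub-step, after which the disjointness of root sets follows immediately.

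The main obstacle I expect is precisely verifying $\gcd(p,2\lfloor \ell/2\rfloor+1)=1$, since $\lfloor \ell/2\rfloor=\lfloor \lfloor p/3\rfloor/2\rfloor$ depends on the residue of $p$ modulo $6$, and one must check each admissible residue class (recall $p$ is odd and coprime to $3$, so $p\equiv 1$ or $5\pmod 6$). In each case $2\lfloor \ell/2\rfloor+1$ is a small explicit linear function of $p$, and I would confirm coprimality with $p$ by a short residue computation; this is the casework I would carry out carefully rather than the injectivity argument, which is routine. Once this coprimality is in hand, the contradiction above closes the proof and yields $\gcd\bigl(S_d(z)-S_{d-1}(z),\,S_{\lfloor \ell/2\rfloor}(z)-S_{\lfloor \ell/2\rfloor-1}(z)\bigr)=1$.
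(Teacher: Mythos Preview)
Your proposal is correct and follows essentially the same route as the paper: both reduce, via Lemma~\ref{roots}(2), to showing that no fraction $\frac{2j+1}{2d+1}$ equals any $\frac{2k+1}{2\lfloor\ell/2\rfloor+1}$, and both deduce this from $\gcd(2d+1,\,2\lfloor\ell/2\rfloor+1)=1$ (noting $2d+1=p$). The only cosmetic difference is that the paper verifies the coprimality in one stroke by computing $3(2\lfloor\ell/2\rfloor+1)-(2d+1)\in\{\pm1,\pm2\}$ and using that $2d+1$ is odd, whereas you propose casework on $p\bmod 6$; the two computations are the same in substance.
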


\begin{proof}
By Lemma \ref{roots} (2), it suffices to show that 
\begin{equation}
\frac{2j+1}{2d+1} \not= \frac{2j'+1}{2\lfloor \frac{\ell}{2}\rfloor+1}
\label{jj'}
\end{equation} 
where $0 \le j \le d-1$ and $0 \le j' \le \lfloor \frac{\ell}{2}\rfloor -1$. 
It is easy to see that Eq. \eqref{jj'} holds true 
if $\gcd \left( 2d+1, 2\lfloor \frac{\ell}{2}\rfloor+1\right)=1$.
Recall that $d=\frac{p-1}{2}$ and $\ell=\lfloor \frac{p}{3}\rfloor$. 
Since $3(2\lfloor \frac{\ell}{2}\rfloor+1)-(2d+1)$ is equal to 
either $3\ell-p$ or $3(\ell+1)-p$, and $3\ell-p=3\lfloor \frac{p}{3}\rfloor-p$ 
is equal to either $-1$ or $-2$ 
(note that $\gcd(p,3)=1$), $3(2\lfloor \frac{\ell}{2}\rfloor+1)-(2d+1)$ is equal 
to either $\pm 1$ or $\pm 2$. It follows that 
$\gcd \left( 2d+1, 2\lfloor \frac{\ell}{2}\rfloor+1\right)$ is a divisor of $2$. 
Since $2d+1$ is odd, we must have 
$\gcd \left(2d+1, 2\lfloor \frac{\ell}{2}\rfloor+1\right)=1$. 
\end{proof}

\begin{lemma}
$\gcd \left(S_d(z)-S_{d-1}(z), S_{\ell-1-\lfloor\frac{\ell}{2}\rfloor}(z)\right)
=\gcd \left(S_d(z)-S_{d-1}(z), S_{d-\ell-1}(z)\right)=1$. 
\label{gcd2}
\end{lemma}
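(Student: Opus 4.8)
The plan is to argue exactly as in the proof of Lemma \ref{gcd1}, reducing each polynomial gcd to a coprimality statement between two integers. First I would record, via Lemma \ref{roots}, that the roots of $S_d(z)-S_{d-1}(z)$ are the numbers $2\cos\big(\frac{2j+1}{2d+1}\pi\big)$ with $0\le j\le d-1$, while the roots of $S_k(z)$ are $2\cos\big(\frac{j'}{k+1}\pi\big)$ with $1\le j'\le k$. Both polynomials are squarefree, so their gcd is $1$ precisely when they share no root. Since all angles involved lie in $(0,\pi)$, where cosine is injective, a common root would force
\[
\frac{2j+1}{2d+1}=\frac{j'}{k+1}, \qquad\text{equivalently}\qquad (2j+1)(k+1)=j'(2d+1).
\]
Hence it suffices to show $\gcd(k+1,2d+1)=1$: then $2d+1$ divides $(2j+1)(k+1)$ while being coprime to $k+1$, so $2d+1\mid 2j+1$, which is impossible because $0<2j+1\le 2d-1<2d+1$.

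It then remains to verify this coprimality for the two relevant values of $k$, namely $k=d-\ell-1$ (so $k+1=d-\ell$) and $k=\ell-1-\lfloor \ell/2\rfloor$ (so $k+1=\ell-\lfloor \ell/2\rfloor=\lceil \ell/2\rceil$). For the first, because $2d+1$ is odd I would pass to $2(d-\ell)$ and use the identity $2(d-\ell)+(2\ell+1)=2d+1$ to get $\gcd(d-\ell,2d+1)=\gcd(2\ell+1,2d+1)$. This last gcd is killed exactly as in Lemma \ref{gcd1}: using $p=2d+1$ and $p-3\lfloor p/3\rfloor\in\{1,2\}$ (which holds since $\gcd(p,3)=1$), one has $3(2\ell+1)-2(2d+1)=3-2(p-3\ell)\in\{\pm 1\}$, so any common divisor of $2\ell+1$ and $2d+1$ divides $\pm 1$, forcing $\gcd(2\ell+1,2d+1)=1$.

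For the second value of $k$ the cleanest route is to notice that the two indices actually coincide. Writing $p=6m+1$ or $p=6m+5$ (the only residues compatible with $p$ odd and $\gcd(p,3)=1$), a short case check gives $d=\ell+\lceil \ell/2\rceil$, hence $\lceil \ell/2\rceil=d-\ell$ and $S_{\ell-1-\lfloor \ell/2\rfloor}=S_{d-\ell-1}$; the required gcd is then literally the one already computed. Alternatively, one can compute $\gcd(\lceil \ell/2\rceil,2d+1)$ directly by the same Bézout reduction.

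The routine algebra here is harmless; the one place that demands care is the bookkeeping with the floor and ceiling functions — checking that $k+1=\lceil \ell/2\rceil=d-\ell$ and that the constant $3-2(p-3\ell)$ really lands in $\{\pm1\}$ across both residue classes of $p$ modulo $6$. Once these index identities are pinned down, the statement is a one-line consequence of Lemma \ref{roots} together with the elementary divisibility argument, in complete parallel with Lemma \ref{gcd1}.
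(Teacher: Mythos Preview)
Your argument is correct and follows exactly the route the paper intends: reduce a shared root of $S_d-S_{d-1}$ and $S_k$ via Lemma~\ref{roots} to the arithmetic statement $\gcd(2d+1,k+1)=1$, and then verify this by a Bezout identity, just as in Lemma~\ref{gcd1}. Your additional observation that $d-\ell=\lceil\ell/2\rceil$ (so the two polynomials $S_{d-\ell-1}$ and $S_{\ell-1-\lfloor\ell/2\rfloor}$ coincide) is not recorded in the paper but is a clean way to collapse the two cases into one.
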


\begin{proof}
By Lemma \ref{roots} (1), it suffices to show that 
\[
\frac{2j+1}{2d+1} \not= \frac{j'}{\ell-\lfloor \frac{\ell}{2}\rfloor},\hspace*{0.5cm}
\frac{2k+1}{2d+1} \not= \frac{k'}{d -\ell}
\]
where $0 \le j \le d-1$, $0 \le j' \le \ell - \lfloor \frac{\ell}{2}\rfloor -1$, 
$0 \le k \le d-1$ and $0 \le k' \le d -\ell - 1$. 
These hold true if $\gcd \left( 2d+1, \ell-\lfloor \frac{\ell}{2}\rfloor \right)=1$ and 
$\gcd \left( 2d+1, d -\ell \right)=1$.
Since the proof is similar to that of Lemma \ref{gcd1}, we omit the details. 
\end{proof}

We now finish the proof of Theorem \ref{thm_irr_bp3}. 
From Lemmas \ref{roots}, \ref{gcd1} and \ref{gcd2}, we have 
$\gcd \left(P(z),Q(z)R(z)\right)=1$. 
Hence Lemma \ref{irred} implies that $\Phi_w(x,z)$ is irreducible in $\BC[x,z]$ 
for the 2-bridge knot $\fb(p,3)$ and this completes the proof of 
Theorem \ref{thm_irr_bp3}. 


\end{document}